\theoremstyle{plain}
\newtheorem{theorem}{Theorem}%[section]
\newtheorem{proposition}[theorem]{Proposition}
\newtheorem{prop}[theorem]{Proposition}
\newtheorem{corollary}[theorem]{Corollary}
\newtheorem{lemma}[theorem]{Lemma}
\theoremstyle{definition}
\newtheorem{question}[theorem]{Question}
 \newcommand{\N}{\mathbb{N}}
  \newcommand{\C}{\mathbb{C}}
 \renewcommand{\geq}{\geqslant}
 \renewcommand{\leq}{\leqslant}
\begin{document}
\title{Norm-attaining compact operators}

\author[Miguel Martin]{}

  \thispagestyle{plain}

\thanks{Supported by Spanish MICINN and FEDER project no.\ MTM2012-31755 and by Junta de Andaluc\'{\i}a and FEDER grants FQM-185 and P09-FQM-4911.}

\subjclass[2000]{Primary 46B20; Secondary 46B04, 46B45, 46B28, 47B07}
\keywords{Banach space; norm-attaining operators; compact operators; approximation property; strict convexity}

\date{June 5th, 2013. Revised May 16th, 2014}

\maketitle

\centerline{\textsc{\large Miguel Mart\'{\i}n}}

\begin{center}\small Departamento de An\'{a}lisis Matem\'{a}tico \\ Facultad de
Ciencias \\ Universidad de Granada \\ 18071 Granada, Spain \\
\emph{E-mail:} \texttt{mmartins@ugr.es}
\end{center}

\begin{abstract}
We show examples of compact linear operators between Banach spaces which cannot be approximated by norm attaining operators. This is the negative answer to an open question posed in the 1970's. Actually, any strictly convex Banach space failing the approximation property serves as the range space. On the other hand, there are examples in which the domain space has a Schauder basis.
\end{abstract}

\section{Introduction}

Motivated by the classical Bishop-Phelps theorem of 1961 \cite{BishopPhelps} stating the density of norm-attaining functionals on every Banach space, the study of the density of norm-attaining operators started with J.~Lindenstrauss' 1963 paper \cite{Lindens}, where the author showed that the Bishop-Phelps theorem is not longer true for operators and gave some partial positive results. We recall that an operator $T$ between two Banach spaces $X$ and $Y$ is said to \emph{attain its norm} whenever there is $x\in X$ with $\|x\|=1$ such that $\|T\|=\|T(x)\|$ (i.e.\ the supremum defining the operator norm is actually a maximum). An intensive research about this topic has been developed by, among others, J.~Bourgain in the 1970's, J.~Partington and W.~Schachermayer in the 1980's, and M.~Acosta, W.~Gowers and R.~Pay\'{a} in the 1990's. The expository paper \cite{Acosta-RACSAM} can be used for reference and background.

All known examples of operators which cannot be approximated by norm-attaining ones are non-compact, so the question whether every linear compact operator between Banach spaces can be approximated by norm-attaining operators seems to be open. It was explicitly asked by J.~Diestel and J.~Uhl in the 1976 paper \cite{Diestel-Uhl-Rocky} (as Problem~4 in page~6) and in their monograph on vector measures \cite[p.~217]{D-U}, and also in the 1979 paper by J.~Johnson and J.~Wolfe \cite{JoWo} (as Question~2 in page~17). More recently, the question also appeared in the 2006 expository paper by M.~Acosta \cite[p.~16]{Acosta-RACSAM}.

The main aim of this paper is to answer the question in the negative by providing two Banach spaces $X$ and $Y$ and a compact linear operator from $X$ into $Y$ which cannot be approximated by norm-attaining operators. This comes from extending an idea of Lindenstrauss for $c_0$ to its closed subspaces and applying it to Enflo's counterexample to the approximation problem. Moreover, thanks to an example of W.~Johnson and G.~Schechtman, the space $X$ can be taken with Schauder basis. It is also possible to get an example where $X=Y$. We also show that for every strictly convex Banach space $Y$ without the approximation property, there exists a Banach space $X$ such that $K(X,Y)$ is not contained in the closure of the set of norm-attaining operators. Finally, we present some known conditions ensuring the density of norm-attaining operators in the space of compact operators and discuss some open questions.

\vspace*{0.2cm}

Let us finish the introduction with the needed notation. Given two (real or complex) Banach spaces $X$ and $Y$, we write $L(X,Y)$ for the Banach space of all bounded linear operators from $X$ into $Y$, endowed with the operator norm. By $K(X,Y)$ and $F(X,Y)$ we denote the subspaces of $L(X,Y)$ of compact operators and finite-rank operators, respectively. We write $B_X$ to denote the closed unit ball of $X$. The set of all norm-attaining operators from $X$ into $Y$ is denoted by $NA(X,Y)$.

\section{The results}\label{sec:counterexamples}

Let us start with the promised counterexample.

\begin{theorem} \label{fact}
There exist compact linear operators between Banach spaces which cannot be approximated by norm-attaining operators.
\end{theorem}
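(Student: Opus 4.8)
The plan is to build a concrete compact operator whose distance to the norm-attaining set is bounded below. The abstract tells me the range should be a strictly convex space failing the approximation property, and the construction "extends an idea of Lindenstrauss for $c_0$." So let me think carefully about what that Lindenstrauss idea is and how strict convexity + failure of AP combine to block approximation.

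Lindenstrauss's original observation: there are operators into $c_0$ that cannot be approximated by norm-attaining ones. The mechanism: an operator into $c_0$ is given by a sequence of functionals $(f_n)$ with $f_n \to 0$ weak-star. For such an operator to attain its norm at some $x$, you need $\|Tx\|_\infty = \sup_n |f_n(x)| = \|T\|$ to be attained, i.e., some $|f_n(x)|$ must equal the sup. But if the sup is only approached in the limit... Actually the key point: in $c_0$ the norm is a max over coordinates that go to zero, so to attain you need a single coordinate achieving the sup. Lindenstrauss used this to show certain operators aren't norm attaining, and moreover that perturbations can't fix it.

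Now the role of strict convexity and failure of AP. Let me think about the structure. Suppose $Y$ is strictly convex and lacks AP. Failure of AP means there's a compact operator $S$ on some space that can't be approximated by finite-rank operators. By a standard reformulation (Grothendieck), failure of AP of $Y$ gives a Banach space $Z$ and a compact operator from $Z$ to $Y$ (or from $Y$ to something) that's not a limit of finite-rank operators. The strict convexity presumably forces any norm-attaining operator into $Y$ to behave like a finite-rank operator in a relevant sense, or forces the attaining point to interact with the geometry so that approximation by NA operators would yield approximation by finite-rank operators, contradicting failure of AP.

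Here's my concrete plan. First, I would recall the characterization that a compact operator $K$ is approximable by finite-rank operators iff ... and use that $Y$ failing AP yields, via Grothendieck/Schauder-type arguments, a concrete compact operator $T: X \to Y$ that is NOT in $\overline{F(X,Y)}$, for a suitable $X$ (possibly $X$ with a basis via the Johnson–Schechtman example). Second, the heart of the argument: I would prove a lemma saying that when $Y$ is strictly convex, every norm-attaining compact operator $T \in NA(X,Y)$ with $\|T\|$ attained at $x_0$ has its image "pinned" — strict convexity means the face of $B_Y$ where $Tx_0$ sits is a single extreme point, so the set of norm directions is rigid. The plan is to leverage this to show $\overline{NA(X,Y)} \cap K(X,Y) \subseteq \overline{F(X,Y)}$, or more precisely that the closure of the norm-attaining operators cannot contain our fixed $T$. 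Third, I would assemble: since $T \in K(X,Y) \setminus \overline{F(X,Y)}$ but (by the lemma) any approximant by norm-attaining operators would force $T \in \overline{F(X,Y)}$, we get a contradiction, so $T$ is not approximable by norm-attaining operators.

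The main obstacle I expect is the lemma linking strict convexity to finite-rank approximation — precisely, showing that strict convexity of $Y$ forces norm-attaining operators to be approximable by finite-rank operators, or at least that membership of a compact operator in $\overline{NA(X,Y)}$ implies membership in $\overline{F(X,Y)}$. The delicate point is controlling, uniformly over a sequence of norm-attaining approximants, the attaining points and the associated extreme points of $B_Y$; strict convexity gives uniqueness of the face but one must upgrade this to a genuine finite-rank (or compactness-of-the-relevant-structure) conclusion. I would try to isolate this as a clean lemma: \emph{if $Y$ is strictly convex and $T\in \overline{NA(X,Y)}\cap K(X,Y)$, then $T\in\overline{F(X,Y)}$}, and then the theorem follows immediately by choosing $Y$ strictly convex without AP (e.g.\ an Enflo-type space renormed to be strictly convex) and $X$, $T$ witnessing failure of AP.
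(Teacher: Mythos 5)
Your overall architecture is the right one --- find a compact $T$ outside $\overline{F(X,Y)}$ using failure of the approximation property, and prove a lemma saying strict convexity of $Y$ traps norm-attaining operators inside (the closure of) the finite-rank operators --- but the key lemma as you state it is false, and the gap is precisely the ingredient you never mention: a hypothesis on the \emph{domain}. The lemma ``if $Y$ is strictly convex and $T\in \overline{NA(X,Y)}\cap K(X,Y)$, then $T\in\overline{F(X,Y)}$'' cannot hold for arbitrary $X$: take $X$ separable reflexive without the approximation property; then $X^*$ fails it too, so there is a compact $T:X\to Y$ with $Y$ strictly convex (after renorming) and $T\notin\overline{F(X,Y)}$, yet $T$ attains its norm because every compact operator on a reflexive domain does. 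So no amount of work on the range side (``pinning'' the image at an extreme point of $B_Y$, uniformity over approximants, etc.) can close this; strict convexity of $Y$ alone proves nothing.

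The missing idea is that $X$ must be a closed subspace of $c_0$, and the mechanism is the flatness of the ball of $c_0$ at the attaining point, not the rigidity of the face of $B_Y$. If $T\in NA(X,Y)$ attains its norm at $x_0$, choose $N$ with $|x_0(n)|<1/2$ for $n\geq N$; then for every $z$ in the finite-codimensional subspace $Z=\{x\in X: x(i)=0,\ 1\leq i\leq N\}$ with $\|z\|\leq 1/2$ one has $\|x_0\pm z\|\leq 1$, hence $\|T(x_0)\pm T(z)\|\leq 1$, and strict convexity of $Y$ at the norm-one point $T(x_0)$ forces $T(z)=0$. Thus $T$ vanishes on a finite-codimensional subspace, i.e.\ $NA(X,Y)\subseteq F(X,Y)$ exactly (this is also the actual content of Lindenstrauss's Proposition~4, which concerns operators \emph{from} $c_0$, not into it as you recalled). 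One then takes $X$ an Enflo-type subspace of $c_0$, so that $X^*$ fails the approximation property and hence some compact $T:X\to Y$ lies outside $\overline{F(X,Y)}\supseteq\overline{NA(X,Y)}$. Without restricting the domain in this way your plan does not go through.
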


The idea for the proof of the above result comes from extending (the proof of) \cite[Proposition~4]{Lindens} to closed subspaces of $c_0$ and then apply it to Enflo's counterexample to the approximation problem. We state the first ingredient for further use. Recall that a Banach space $Y$ is said to be \emph{strictly convex} if the unit sphere of $Y$ fails to contain non-trivial segments, equivalently, if for every $y\in Y$ with $\|y\|=1$ and $z\in Y$, $\|y \pm z\|\leq 1$ implies $z=0$.

\begin{lemma}\label{lemma-subspace-c0-into-sc}
Let $X$ be a closed subspace of $c_0$ and let $Y$ be a strictly convex Banach space. Then, $NA(X,Y)\subseteq F(X,Y)$.
\end{lemma}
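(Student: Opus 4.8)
The plan is to show that any norm-attaining $T\in NA(X,Y)$ must vanish on a finite-codimensional subspace of $X$, which immediately forces $T$ to have finite rank. Normalizing so that $\|T\|=1$ (the case $T=0$ being trivial), I would fix $x_0\in X$ with $\|x_0\|=1$ and $\|Tx_0\|=1$. Writing $x_0=(x_0(n))_n\in c_0$, the set $F=\{n\in\N : |x_0(n)|=1\}$ is finite and nonempty, since $x_0(n)\to 0$ forces the supremum norm to be attained on only finitely many coordinates. I would then introduce the subspace $Z=\{x\in X : x(n)=0 \text{ for all } n\in F\}$, which has codimension at most $|F|$ in $X$ as the kernel of finitely many coordinate functionals restricted to $X$, and the goal becomes proving $T|_Z=0$.

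The heart of the argument is a perturbation in the spirit of Lindenstrauss. Fixing $z\in Z$ with $z\neq 0$, I would examine $x_0\pm t\,z\in X$ for small $t>0$. On coordinates $n\in F$ these vectors agree with $x_0$ (because $z(n)=0$), so they retain modulus one there; on coordinates $n\notin F$ I would estimate $|x_0(n)\pm t\,z(n)|\leq (1-\delta)+t\|z\|_\infty$, where $\delta:=1-\sup_{n\notin F}|x_0(n)|$. Choosing $t\leq \delta/\|z\|_\infty$ then gives $\|x_0\pm t\,z\|_\infty\leq 1$, whence $\|Tx_0\pm t\,Tz\|\leq \|T\|=1=\|Tx_0\|$ for both signs. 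Applying the strict-convexity characterization recalled above to the unit vector $y=Tx_0$ and to $w=t\,Tz$ yields $Tz=0$. As $z\in Z$ was arbitrary, $T$ factors through the finite-dimensional quotient $X/Z$, so $T\in F(X,Y)$.

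The main obstacle, and the place where the subspace setting genuinely departs from Lindenstrauss's original $c_0$ argument, is twofold. First, the coordinate vectors $e_n$ that one perturbs along in $c_0$ need not belong to $X$; this is exactly why I replace single-coordinate perturbations by arbitrary directions $z$ vanishing on $F$, which are guaranteed to stay inside the subspace $X$. Second, and more delicate, is checking that the gap $\delta$ is strictly positive. This is precisely where membership in $c_0$ is indispensable: since $|x_0(n)|\to 0$, the index set on which $|x_0(n)|$ is bounded away from $0$ is finite, so outside $F$ the quantities $|x_0(n)|$ have supremum strictly below $1$, giving $\delta>0$. Without this feature of $c_0$ the perturbed vectors could leave the unit ball and the whole argument would collapse.
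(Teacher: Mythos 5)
Your argument is correct and follows essentially the same route as the paper: both proofs perturb the norm-attaining point $x_0$ by small elements of a finite-codimensional subspace of $X$ vanishing on the finitely many coordinates where $|x_0(n)|$ is large (the paper uses the threshold $|x_0(n)|<1/2$ for $n\geq N$ with perturbations of norm at most $1/2$, while you use the exact attainment set $F$ and a $\delta$-dependent scaling, a purely cosmetic difference), and then invoke strict convexity to conclude $T$ vanishes there. All the points you flag as delicate (finiteness of $F$, positivity of $\delta$) are handled correctly.
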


\begin{proof}
Fix $T\in NA(X,Y)$ and $x_0\in B_X$ such that $\|T(x_0)\|=\|T\|=1$. As $x_0\in c_0$, there is $N\in \N$ such that $|x_0(n)|<1/2$ for every $n\geq N$. Now, consider the subspace $Z$ of $X$ given by
$$
Z:=\bigl\{x\in X\,:\, x(i)=0\ \text{for } 1\leq i \leq N \bigr\}
$$
and observe that for every $z\in Z$ with $\|z\|\leq 1/2$, we have
$$
\left\|x_0 \pm z\right\|\leq 1.
$$
Therefore,
$$
\left\|T (x_0) \pm T(z)\right\|\leq 1
$$
and, being $Y$ strictly convex and $\|T(x_0)\|=1$, it follows that $T(z)=0$. Therefore, $T$ vanished on a finite-codimensional space.
\end{proof}

Prior to give the proof of the theorem, we have to recall the concept of (Grothendieck) approximation property. We refer to \cite{Linden-Tz} for background. A Banach space $X$ has the \emph{approximation property} if for every compact set $K$ and every $\varepsilon>0$, there is $R\in F(X,X)$ such that $\|x-R(x)\| <\varepsilon$ for all $x\in K$. It was shown by P.~Enflo in 1973 that there are Banach spaces failing the approximation property showing, actually, that there are closed subspaces of $c_0$ without the approximation property.

\begin{proof}[Proof of Theorem~\ref{fact}] Let $X$ be a closed subspace of $c_0$ failing the approximation property (Enflo's example works, see \cite[Theorem~2.d.6]{Linden-Tz}). Then, $X^*$ also fails the approximation property so there is a Banach space $Y$ and a compact operator $T:X\longrightarrow Y$ which cannot be approximated by finite-rank operators (see \cite[Theorem~1.e.5]{Linden-Tz}). As we may clearly suppose that $Y$ is separable (considering the closure of $T(X)$) and the approximation property is of isomorphic nature, we may and do suppose that $Y$ is strictly convex (recall that every separable Banach space admits a strictly convex equivalent renorming by an old result of V.~Klee, see \cite[\S II.2]{DGZ}). Now, Lemma~\ref{lemma-subspace-c0-into-sc} shows that $T$ cannot be approximated by norm-attaining operators.
\end{proof}

Next, we would like to present two ways to obtain examples as in Theorem~\ref{fact}. First, with respect to domain spaces, we observe that the above proof works for arbitrary closed subspaces of $c_0$ whose dual fails the approximation property.

\begin{prop}
For every closed subspace $X$ of $c_0$ such that $X^*$ fails the approximation property, there exist a Banach space $Y$ and a compact linear operator from $X$ into $Y$ which cannot be approximated by norm-attaining operators.
\end{prop}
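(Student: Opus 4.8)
The plan is to repeat, almost verbatim, the argument used for Theorem~\ref{fact}, simply replacing the step in which the failure of the approximation property for $X^*$ was \emph{deduced} from Enflo's example by the corresponding hypothesis. Indeed, the only role played by Enflo's subspace in that proof was to guarantee that $X^*$ fails the approximation property; once this is available, the domain $X$ enters only through the hypothesis that it is a closed subspace of $c_0$. Thus the present statement is really the observation that the proof of the theorem generalizes as soon as the non-approximability of $X^*$ is granted.

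First I would invoke the hypothesis that $X^*$ fails the approximation property to produce, via \cite[Theorem~1.e.5]{Linden-Tz}, a Banach space $Y_0$ together with a compact operator $T\colon X\longrightarrow Y_0$ which cannot be approximated (in operator norm) by finite-rank operators. Replacing $Y_0$ by the closure of $T(X)$ if necessary, I may assume that $Y_0$ is separable.

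Next I would arrange strict convexity of the range. Since every separable Banach space admits a strictly convex equivalent renorming (Klee, see \cite[\S II.2]{DGZ}), I would replace the norm of $Y_0$ by such a renorming, obtaining a strictly convex space $Y$. The one point that requires a moment's care is that $T$ remains non-approximable by finite-rank operators after this change of norm. This is immediate because an equivalent renorming of $Y_0$ alters the operator norm on $L(X,Y_0)$ only up to multiplicative constants, so the (positive) distance from $T$ to $F(X,Y_0)$ stays positive. I do not expect any genuine obstacle here; it is exactly the assertion that the relevant property is of isomorphic nature.

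Finally, with $X$ a closed subspace of $c_0$ and $Y$ strictly convex, Lemma~\ref{lemma-subspace-c0-into-sc} yields $NA(X,Y)\subseteq F(X,Y)$. Since $T$ does not belong to the closure of $F(X,Y)$, it cannot belong to the closure of $NA(X,Y)$ either; that is, $T$ cannot be approximated by norm-attaining operators, which is precisely what we want.
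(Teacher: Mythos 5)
Your proposal is correct and follows exactly the paper's intended argument: the paper gives no separate proof, noting only that the proof of Theorem~\ref{fact} works verbatim once the failure of the approximation property for $X^*$ is taken as a hypothesis rather than deduced from Enflo's example. Your added remark that non-approximability by finite-rank operators survives an equivalent renorming of the range is the right (and only) point needing verification.
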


Using the result due to W.~Johnson and G.~Schechtman \cite[Corollary~JS, p.~127]{Joh-Oik} that there is a closed subspace of $c_0$ with Schauder basis whose dual fails the approximation property, we may state the following corollary.

\begin{corollary}\label{coro:Schauder-JS}
There exist a Banach space $X$ with Schauder basis, a Banach space $Y$ and a compact linear operator $T$ between $X$ and $Y$ which cannot be approximated by norm-attaining operators.
\end{corollary}

Dealing with range spaces, the idea of Theorem~\ref{fact} can be also squeezed to show that for every strictly convex Banach space $Y$ without the approximation property, an example of the same kind can be constructed. We will use the following characterization of the approximation property, known to A.~Grothendieck (see ``Proposition'' 37 in p.~170 of \cite{Gro-MAMS}), which follows easily from the compact factorization of every compact operator through a closed subspace of $c_0$. A proof of the lemma can be found in \cite[Theorem~18.3.2]{Jarchow}.

\begin{lemma}[Grothendieck]\label{lemma-Grothendieck} A Banach space $Y$ has the approximation property if and only if $F(X,Y)$ is dense in $K(X,Y)$ for every closed subspace $X$ of $c_0$.
\end{lemma}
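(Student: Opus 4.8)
The plan is to prove the two implications separately; the forward one is routine and the converse carries the real content. I will use freely that a finite-rank operator postcomposed or precomposed with a bounded operator is again finite-rank, so that all the operators manufactured below land in the appropriate $F(\,\cdot\,,\,\cdot\,)$.

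For the implication ``$Y$ has the approximation property $\Rightarrow$ $F(X,Y)$ is dense in $K(X,Y)$'', I would argue directly from the definition, and in fact without using that $X$ sits in $c_0$. Fix a closed subspace $X$ of $c_0$, an operator $T\in K(X,Y)$ and $\varepsilon>0$. Since $T$ is compact, the set $\overline{T(B_X)}$ is a compact subset of $Y$, so applying the approximation property of $Y$ to it yields $R\in F(Y,Y)$ with $\|y-R(y)\|<\varepsilon$ for all $y\in\overline{T(B_X)}$. Then $RT\in F(X,Y)$ and $\|T-RT\|=\sup_{x\in B_X}\|T(x)-R(T(x))\|\leq\varepsilon$, so $T$ is approximable by finite-rank operators.

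For the converse I would combine the hypothesis with two classical facts. The first is the standard characterization of the approximation property: $Y$ has it precisely when $F(W,Y)$ is dense in $K(W,Y)$ for \emph{every} Banach space $W$. Hence it suffices to upgrade the hypothesis (density over closed subspaces of $c_0$) to density over all Banach spaces. The second fact is the compact factorization of compact operators through $c_0$: given any Banach space $W$ and any $T\in K(W,Y)$, there are a closed subspace $Z$ of $c_0$ and compact operators $A\in K(W,Z)$, $B\in K(Z,Y)$ with $T=B\circ A$. Granting this, fix $\varepsilon>0$ and use the hypothesis on $Z\subseteq c_0$ to choose $R\in F(Z,Y)$ with $\|B-R\|<\varepsilon/(1+\|A\|)$. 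Then $RA\in F(W,Y)$ and $\|T-RA\|=\|(B-R)A\|\leq\|B-R\|\,\|A\|<\varepsilon$, which gives the required density and hence the approximation property.

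The main obstacle is establishing the compact factorization through $c_0$. I would derive it from the adjoint: compactness of $T$ makes $T^{*}(B_{Y^{*}})$ relatively norm-compact in $W^{*}$, so by Grothendieck's description of compact sets it is contained in the closed absolutely convex hull of a norm-null sequence $(w_n^{*})$ in $W^{*}$. The map $A(w)=(w_n^{*}(w))_n$ is then a compact operator into $c_0$ that dominates $T$, in the sense that $\|T(w)\|\leq\sup_n|w_n^{*}(w)|=\|A(w)\|_{\infty}$, which allows one to define the second factor on $\overline{A(W)}$ by $B(A(w))=T(w)$. To make \emph{both} factors compact — so that the hypothesis, which concerns $K(Z,Y)$, applies to $B$ — one inserts a diagonal scaling, replacing $w_n^{*}$ by $\lambda_n w_n^{*}$ for suitable $\lambda_n\to\infty$ with $\lambda_n\|w_n^{*}\|\to 0$; this keeps $A$ compact while forcing $B$ compact as well. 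Verifying these domination and compactness estimates is the one genuinely technical step, and it is precisely where the compactness of $T$ is spent; the rest is the bookkeeping of composition norms recorded above.
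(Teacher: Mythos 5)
Your proof is correct and follows essentially the same route the paper indicates for this lemma: the forward direction by composing with a finite-rank approximant of the identity on the compact set $\overline{T(B_X)}$, and the converse via the compact (Terzio\u{g}lu-type) factorization of a compact operator through a closed subspace of $c_0$ together with the classical Grothendieck characterization of the approximation property as density of $F(W,Y)$ in $K(W,Y)$ for all $W$. The paper itself only cites \cite[Theorem~18.3.2]{Jarchow} and sketches exactly this reduction, so your write-up is a faithful (and more detailed) version of the intended argument.
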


We are now able to present the promised result.

\begin{prop}\label{theo:every-strictly-convex}
Let $Y$ be a strictly convex Banach space without the approximation property. Then, there exist a Banach space $X$ and a compact linear operator from $X$ into $Y$ which cannot be approximated by norm-attaining operators.
\end{prop}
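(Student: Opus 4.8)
The plan is to combine the two lemmas already at hand, since between them they do essentially all the work. The starting point is the contrapositive of Grothendieck's characterization (Lemma~\ref{lemma-Grothendieck}): because $Y$ fails the approximation property, there must exist at least one closed subspace $X$ of $c_0$ for which $F(X,Y)$ is \emph{not} dense in $K(X,Y)$. I would take such an $X$ as the domain space, together with a compact operator $T\colon X\to Y$ lying outside the closure $\overline{F(X,Y)}$ (taken inside $L(X,Y)$, or equivalently inside $K(X,Y)$). Note that this step only asserts existence; there is no need to exhibit the subspace $X$ or the operator $T$ explicitly.

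Next I would invoke Lemma~\ref{lemma-subspace-c0-into-sc}. Since $X$ is a closed subspace of $c_0$ and $Y$ is strictly convex by hypothesis, that lemma applies verbatim and yields $NA(X,Y)\subseteq F(X,Y)$. Passing to closures gives $\overline{NA(X,Y)}\subseteq \overline{F(X,Y)}$, and since $T\notin \overline{F(X,Y)}$ by the choice made in the first step, it follows that $T\notin \overline{NA(X,Y)}$. In other words, $T$ cannot be approximated by norm-attaining operators, which is exactly the conclusion sought.

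As for the main obstacle: there is essentially none once both lemmas are available, as the argument reduces to a two-line deduction. The only points that require a moment's care are, first, reading Grothendieck's characterization in its contrapositive form correctly (failure of the approximation property for $Y$ supplies some subspace of $c_0$ witnessing non-density, rather than forcing non-density for every such subspace), and second, observing that the strict convexity of $Y$ — which here is a standing hypothesis, in contrast to Theorem~\ref{fact} where it had to be manufactured by a Klee-type renorming — is precisely what makes Lemma~\ref{lemma-subspace-c0-into-sc} applicable. The real content of the proposition is thus located entirely in the two preparatory lemmas; this final statement is a clean corollary of their conjunction.
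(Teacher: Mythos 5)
Your argument is correct and is essentially identical to the paper's own proof: both obtain a closed subspace $X$ of $c_0$ with $F(X,Y)$ not dense in $K(X,Y)$ from Lemma~\ref{lemma-Grothendieck}, and then conclude via $NA(X,Y)\subseteq F(X,Y)$ from Lemma~\ref{lemma-subspace-c0-into-sc}. No issues.
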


\begin{proof}
By Lemma~\ref{lemma-Grothendieck}, there is a closed subspace $X$ of $c_0$ such that $F(X,Y)$ is not dense in $K(X,Y)$. But Lemma~\ref{lemma-subspace-c0-into-sc} implies that $NA(X,Y)\subset F(X,Y)$, so there are compact operators from $X$ into $Y$ which cannot be approximated by norm-attaining operators.
\end{proof}

Compare the result above with the one by M.~Acosta \cite{Aco-Edinburgh} of 1999, stating that there is a Banach space $X$ such that for every infinite-dimensional strictly convex Banach space $Y$, there exists a (non-compact) operator $T\in L(X,Y)$ which cannot be approximated by norm-attaining operators. The case when $Y=\ell_p$ was previously done by W.~Gowers \cite{Gowers} in 1990.

Next, we would like to give a result for subspaces of complex $L_1(\mu)$ spaces. We first need to recall the notion of complex strict convexity. A complex Banach space $Y$ is said to be \emph{complex strictly convex} if for every $y\in Y$ with $\|y\|=1$ and $z\in Y$, the condition $\|y + \theta z\|\leq 1$ for every $\theta\in \C$ with $|\theta|=1$ implies $z=0$. Clearly, strictly convex spaces are complex strictly convex, but the converse is false, as $L_1(\mu)$ spaces are complex strictly convex, see \cite[Proposition~3.2.3]{Istra}. By an obvious adaption of the proof of Lemma~\ref{lemma-subspace-c0-into-sc}, we get that, in the complex case, {\slshape if $X$ is a closed subspace of $c_0$ and $Y$ is a complex strictly convex space, then $NA(X,Y)\subseteq F(X,Y)$.}\  Therefore, the following result follows with the same proof than Theorem~\ref{theo:every-strictly-convex}.

\begin{proposition}
Let $\mu$ be a measure and let $Y$ be a closed subspace of the complex space $L_1(\mu)$ without the approximation property. Then, there exist a Banach space $X$ and a compact linear operator from $X$ into $Y$ which cannot be approximated by norm-attaining operators.
\end{proposition}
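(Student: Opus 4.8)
The plan is to follow, essentially verbatim, the proof of Proposition~\ref{theo:every-strictly-convex}, with ``strictly convex'' replaced throughout by ``complex strictly convex''. The only genuinely new point to record is that the hypotheses force $Y$ itself to be complex strictly convex. Indeed, $L_1(\mu)$ is complex strictly convex by \cite[Proposition~3.2.3]{Istra}, and the defining condition---that $\|y+\theta z\|\leq 1$ for all $\theta\in\C$ with $|\theta|=1$ implies $z=0$---is manifestly inherited by every closed subspace, since the norms involved are the ambient ones. Hence the subspace $Y$ is complex strictly convex.

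With this observation in place, I would run the same two-step argument as before. First, I would invoke Grothendieck's characterization (Lemma~\ref{lemma-Grothendieck}): since $Y$ fails the approximation property, there is a closed subspace $X$ of $c_0$ for which $F(X,Y)$ is \emph{not} dense in $K(X,Y)$. Second, I would apply the complex analogue of Lemma~\ref{lemma-subspace-c0-into-sc} stated just above the proposition, which gives $NA(X,Y)\subseteq F(X,Y)$ for any closed subspace $X$ of $c_0$ and complex strictly convex range $Y$. Combining the two facts, the closure of $NA(X,Y)$ is contained in the closure of $F(X,Y)$, which omits some element of $K(X,Y)$; thus there is a compact operator from $X$ into $Y$ that cannot be approximated by norm-attaining operators.

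I expect no real obstacle here beyond that subspace-inheritance remark, since the structure is identical to Proposition~\ref{theo:every-strictly-convex}. The one point deserving a careful glance is the complex version of Lemma~\ref{lemma-subspace-c0-into-sc}, the ``obvious adaption'' already asserted in the text. In that proof the segment estimate $\|x_0\pm z\|\leq 1$ is replaced by $\|x_0+\theta z\|\leq 1$ for every $|\theta|=1$; this holds on the subspace $Z$ because coordinatewise $|x_0(i)+\theta z(i)|\leq 1$ (the coordinates with $i\leq N$ are untouched, while for the remaining ones $|x_0(i)|<1/2$ and $|z(i)|\leq 1/2$). Complex strict convexity together with $\|T(x_0)\|=1$ then forces $T(z)=0$, so $T$ vanishes on a finite-codimensional subspace, exactly as in the real case.
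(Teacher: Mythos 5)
Your proposal is correct and follows exactly the route the paper intends: note that complex strict convexity of $L_1(\mu)$ passes to the closed subspace $Y$, then run the proof of Proposition~\ref{theo:every-strictly-convex} with the complex analogue of Lemma~\ref{lemma-subspace-c0-into-sc}. Your verification of the coordinatewise estimate $|x_0(i)+\theta z(i)|\leq 1$ in the complex lemma is the right way to justify the ``obvious adaption'' the paper only asserts.
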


We do not know whether this result is also true in the real case. It is known that there is a Banach space $X$ such that for every measure $\mu$ such that $L_1(\mu)$ is infinite dimensional, there is a (non-compact) operator $T$ from $X$ into $L_1(\mu)$ which cannot be approximated by norm-attaining operators (M.~Acosta, \cite{Aco-contemporary}).

Our next result provides with an example in which the domain and the range space coincides.

\begin{theorem}
There exist a Banach space $Z$ and a compact operator from $Z$ into $Z$ which cannot be approximated by norm-attaining operators.
\end{theorem}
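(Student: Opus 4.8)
The goal is to produce a single Banach space $Z$ together with a compact operator $T\colon Z\to Z$ that cannot be approximated by norm-attaining operators. The natural plan is to take the spaces $X$ and $Y$ furnished by Theorem~\ref{fact} (with $X$ a closed subspace of $c_0$ and $Y$ strictly convex and separable) and a compact operator $T_0\colon X\to Y$ that is not a limit of norm-attaining operators, and then glue $X$ and $Y$ together into a single space $Z$ containing isometric (or at least complemented) copies of both. The cleanest candidate is a direct sum $Z:=X\oplus Y$ under a suitable norm, and the operator on $Z$ should be built so that its action essentially reduces to $T_0$ on the $X$-component while killing the $Y$-component. Concretely, I would consider $S\colon Z\to Z$ defined by $S(x,y):=(0,T_0(x))$, which is compact since $T_0$ is, and whose range lies in the $Y$-summand.

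First I would fix the norm on $Z$. The subtlety is that the argument relies on strict convexity of the target in Lemma~\ref{lemma-subspace-c0-into-sc}, but $Z=X\oplus Y$ with $X\subseteq c_0$ will \emph{not} be strictly convex globally, so I cannot apply that lemma verbatim to $S\colon Z\to Z$. The fix is to observe that norm-attainment and approximation for $S$ are governed only by the relevant components: what matters is how $S$ behaves on the $X$-part and where its image sits. I would choose an absolute direct-sum norm on $Z$ and then argue directly, mirroring the proof of Lemma~\ref{lemma-subspace-c0-into-sc}: if $S$ attained its norm at some $(x_0,y_0)\in B_Z$, the value $\|S(x_0,y_0)\|=\|T_0(x_0)\|$ depends only on $x_0\in X\subseteq c_0$, and strict convexity of $Y$ (the summand where the image lives) forces $T_0$ to vanish on a finite-codimensional subspace of $X$. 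Thus any norm-attaining $S$ must be finite-rank on the $X$-component, and more generally any norm-attaining operator on $Z$ whose range lands in $Y$ must be finite-rank in the relevant sense.

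The cleaner route, which I would actually pursue, is to prove an abstract transfer lemma: if there exist Banach spaces $X,Y$ and $T_0\in K(X,Y)\setminus\overline{NA(X,Y)}$, then setting $Z:=X\oplus_\infty Y$ and letting $i\colon X\hookrightarrow Z$, $\pi\colon Z\twoheadrightarrow Y$ be the natural inclusion and projection, the operator $S:=i\circ T_0\circ \pi|_Z$... — more carefully, $S:=j_Y\circ T_0\circ P_X$ where $P_X\colon Z\to X$ is the coordinate projection and $j_Y\colon Y\to Z$ the coordinate embedding — is compact and cannot be approximated by norm-attaining operators on $Z$. The verification has two halves: compactness of $S$ is immediate from compactness of $T_0$; for the approximation claim, I would show that if $\|S-S_n\|\to 0$ with $S_n\in NA(Z,Z)$, then the ``corner'' operators $P_Y\circ S_n\circ j_X\in NA(X,Y)$ (after checking they inherit norm-attainment because $S$ and $S_n$ are concentrated in that corner) converge to $T_0$, contradicting $T_0\notin\overline{NA(X,Y)}$.

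The main obstacle I anticipate is exactly this norm-attainment transfer: showing that norm-attaining operators on $Z$ restrict or compress to norm-attaining operators from $X$ to $Y$ in a way that survives the approximation. The danger is that an $S_n\in NA(Z,Z)$ might attain its norm at a point whose $Y$-coordinate is nonzero, so the norm-attaining vector need not lie in the copy of $X$, and the corner compression need not itself attain its norm. To control this I would exploit the $\oplus_\infty$ structure: because $S$ sends everything into the $Y$-summand and annihilates $Y$, for $S_n$ close to $S$ the norm is essentially achieved in the $X$-to-$Y$ corner, and I can arrange (possibly by composing with the natural projections, which are norm-one and do not increase norms in the $\ell_\infty$-sum) that the compressed operators $Q_n:=P_Y S_n j_X$ satisfy $\|Q_n-T_0\|\to 0$ while each $Q_n$ lies in $NA(X,Y)$ up to an arbitrarily small perturbation. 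Packaging this compression argument cleanly — choosing the direct-sum norm so that the projections are contractive and norm-attainment passes to the corner — is where the real work lies; once that abstract lemma is in place, the theorem follows immediately by feeding in the $X$, $Y$, $T_0$ from Theorem~\ref{fact}.
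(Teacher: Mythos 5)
Your setup coincides exactly with the paper's: $Z=X\oplus_\infty Y$ and $S_0(x,y)=(0,T_0(x))$, where $T_0\in K(X,Y)$ is a norm-one compact operator that cannot be approximated by norm-attaining ones. You also correctly isolate the crux: a norm-attaining $S\in NA(Z,Z)$ close to $S_0$ may attain its norm at a point $(x_0,y_0)$ with $y_0\neq 0$, so the plain corner compression $x\mapsto P_2S(x,0)$ need not attain its norm. But you do not resolve this. Asserting that $Q_n:=P_YS_nj_X$ ``lies in $NA(X,Y)$ up to an arbitrarily small perturbation'' is precisely the statement that requires proof, and no mechanism is offered; composing with the contractive coordinate projections does not produce one, since nothing you have set up controls the defect of norm-attainment of $Q_n$. (The remarks about strict convexity of $Y$ and about norm-attaining operators vanishing on finite-codimensional subspaces are also red herrings here: the transfer argument is purely formal and uses nothing about $X$ or $Y$ beyond the existence of $T_0$.)

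The missing device is a rank-one modification of the embedding of $X$ into $Z$ that carries the norm-attaining point of $S$ into the corner. First one checks, from $\|P_1S\|=\|P_1S-P_1S_0\|\leq\|S-S_0\|<\varepsilon<1/2$ and $\|S\|\geq 1-\varepsilon>1/2$, that $\|P_2S(x_0,y_0)\|=\|P_2S\|=\|S\|$. Then one picks $x_0^*\in S_{X^*}$ with $x_0^*(x_0)=1$ and defines
$$
T(x):=P_2S\bigl(x,\,x_0^*(x)y_0\bigr)\qquad (x\in X).
$$
Since $\bigl\|\bigl(x,x_0^*(x)y_0\bigr)\bigr\|\leq\|x\|$ in the $\ell_\infty$-sum, one gets $\|T\|\leq\|P_2S\|$, while $T(x_0)=P_2S(x_0,y_0)$ gives $\|T(x_0)\|=\|P_2S\|$, so $T\in NA(X,Y)$; and because $P_2S_0\bigl(x,x_0^*(x)y_0\bigr)=T_0(x)$, one has $\|T-T_0\|\leq\|S-S_0\|<\varepsilon$, contradicting $T_0\notin\overline{NA(X,Y)}$. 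This modified embedding $x\mapsto\bigl(x,x_0^*(x)y_0\bigr)$ is the whole point of the proof; without it (or some substitute for it) your argument does not close.
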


\begin{proof}
Let $X$ and $Y$ be Banach spaces and fix $T_0\in K(X,Y)$ with $\|T_0\|=1$ and $0<\varepsilon <1/2$. Write $Z=X\oplus_\infty Y$ (i.e.\ $\|(x,y)\|=\max\{\|x\|,\|y\|\}$ for $(x,y)\in X\times Y$) and define $S_0\in K(Z,Z)$ by $S_0(x,y)=(0,T_0(x))$ for every $(x,y)\in X\oplus_\infty Y$, which clearly satisfies $\|S_0\|=1$. We claim that if there is an operator $S\in NA(Z,Z)$ such that $\|S_0-S\|<\varepsilon$, then there is $T\in NA(X,Y)$ such that $\|T_0-T\|<\varepsilon$. Indeed, take $(x_0,y_0)\in B_Z=B_X\times B_Y$ such that $\|S(x_0,y_0)\|=\|S\|$ and write $P_1:Z\longrightarrow X$ and $P_2:Z\longrightarrow Y$ for the natural projections. Now, observe that
$$
\|P_1 S\|=\|P_1 S - P_1 S_0\|\leq \|S-S_0\|<\varepsilon <1/2
$$
so, as $\|S\|\geq 1 -\varepsilon >1/2$, we get that
$$
\|P_2 S(x_0,y_0)\|=\|P_2 S\|=\|S\|.
$$
Next, take $x_0^*\in S_{X^*}$ such that $x_0^*(x_0)=1$ and define the operator $T\in L(X,Y)$ by
$$
T(x)=P_2S\bigl(x,x_0^*(x)y_0\bigr) \qquad \bigl(x\in X\bigr).
$$
Then, $\|T\|\leq \|P_2 S\|$ and $\|T(x_0)\|=\|P_2 S(x_0,y_0)\|= \|P_2 S\|$, so $T\in NA(X,Y)$. On the other hand, for $x\in B_X$,
\begin{align*}
\|T_0(x)-T(x)\|&=\bigl\|P_2 S_0(x,x_0^*(x)y_0)\,- \,P_2S(x,x_0^*(x)y_0)\bigr\|\\ &\leq \|P_2S_0-P_2S\|\leq \|S_0-S\|<\varepsilon,
\end{align*}
as claimed.

Now, if we take $X$, $Y$, and $T_0\in K(X,Y)$ which cannot be approximated by norm-attaining operators, then $Z=X\oplus_\infty Y$ and $S_0\in K(Z,Z)$ defined as above, give the desired example.
\end{proof}

Let us finish the paper with an small discussion about positive results on norm-attaining compact operators. The main open question here (and also in the general theory of norm-attaining operators) is whether finite-rank operators can be always approximated by norm-attaining operators.

\begin{question}
Let $Y$ be a finite-dimensional space. Is it true that for every Banach space $X$, $NA(X,Y)$ is dense in $L(X,Y)$?
\end{question}

We would like to comment that the problem above is open even when $Y$ is the $2$-dimensional real Hilbert space. Related to this, let us also comment that there is a complex version of the Bishop-Phelps theorem which states that for every complex Banach space $X$, complex-linear norm-attaining operators (i.e.\ functionals) from $X$ into $\C$ are dense in the space of all complex-linear operators (see \cite{Phelps1} or \cite[\S 2]{Phelps2}). On the other hand, V.~Lomonosov showed in 2000 \cite{Lomonosov} that there is a complex Banach space $X$ and a (non-complex symmetric) closed convex bounded subset $C$ of $X$ such that there is no element in $X^*$ attaining the supremum of its modulus on $C$.

We now present some conditions on the domain space assuring the density of norm-attaining compact operators. First, we recall the celebrated paper by J.~Bourgain \cite{Bourgain} about dentability in which it is proved that given a Banach space $X$ with the Radon-Nikod\'{y}m property and a Banach space $Y$, every operator from $X$ into $Y$ can be
approximated by compact perturbations of it attaining the norm. Therefore, the following result clearly follows.

\begin{proposition}[Bourgain]
Let $X$ be a Banach space with the Radon-Nikod\'{y}m property. Then for every Banach space $Y$, $NA(X,Y)\cap K(X,Y)$ is dense in $K(X,Y)$.
\end{proposition}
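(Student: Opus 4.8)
The plan is to deduce the statement directly from Bourgain's theorem, which has just been recalled: since $X$ has the Radon--Nikod\'{y}m property, for every operator $T\in L(X,Y)$ and every $\varepsilon>0$ there is a \emph{compact} operator $R\in K(X,Y)$ with $\|R\|<\varepsilon$ such that $T+R$ attains its norm. The whole argument consists in observing that, when the operator we start with is already compact, this compact perturbation keeps us inside $K(X,Y)$.

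First I would fix an arbitrary $T\in K(X,Y)$ and an arbitrary $\varepsilon>0$; the goal is to exhibit an element of $NA(X,Y)\cap K(X,Y)$ at distance less than $\varepsilon$ from $T$. Next I would apply Bourgain's result to $T$ (viewed merely as an element of $L(X,Y)$) to obtain a compact operator $R$ with $\|R\|<\varepsilon$ such that $T+R\in NA(X,Y)$. Finally I would note that $T+R$ is compact, being the sum of the two compact operators $T$ and $R$, so that $T+R\in NA(X,Y)\cap K(X,Y)$, while $\|(T+R)-T\|=\|R\|<\varepsilon$. As $T$ and $\varepsilon$ were arbitrary, this gives the claimed density.

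There is essentially no obstacle here; the only point worth emphasising is that the argument uses the full strength of Bourgain's theorem, namely that the norm-attaining approximant is reached through a perturbation which is itself compact. A perturbation of small norm alone would keep $T+R$ close to $T$ but would not guarantee that it remains compact; it is precisely the compactness of $R$, together with the assumed compactness of $T$, that forces $T+R$ to lie in $K(X,Y)$. This is exactly the sense in which the proposition ``clearly follows'' from the quoted result.
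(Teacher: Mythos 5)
Your argument is exactly the one the paper intends: it deduces the statement from Bourgain's theorem that every operator from a space with the Radon--Nikod\'{y}m property can be approximated by \emph{compact} perturbations of it attaining the norm, noting that a compact perturbation of a compact operator stays in $K(X,Y)$. The paper leaves this deduction implicit (``the following result clearly follows''), and your write-up correctly identifies the key point --- that the perturbation, not just the approximant, must be compact --- so the proposal is correct and matches the paper's approach.
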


For spaces failing the Radon-Nikod\'{y}m property, J.~Diestel and J.~Uhl (1976) \cite{Diestel-Uhl-Rocky} showed that norm-attaining finite-rank operators from $L_1(\mu)$ into any Banach space are dense in the space of all compact operators. This study was continued by J.~Johnson and J.~Wolfe \cite{JoWo} (1979), who proved, among other things, the same result for real $C(K)$ spaces. This last result is proved using a stronger version of the approximation property of the dual. We include the proof of the following result (which is omitted in \cite{JoWo}) for completeness.

\begin{proposition}[\textrm{\cite{JoWo}}]\label{prop-suficiente}
Let $X$ be a Banach space. Suppose there is a net $(P_\alpha)$ of finite-rank contractive projections on $X$ such that for every $x^*\in X^*$, $(P_\alpha^* x^*)\longrightarrow x^*$ in norm. Then for every Banach space $Y$, $NA(X,Y)\cap K(X,Y)$ is dense in $K(X,Y)$.
\end{proposition}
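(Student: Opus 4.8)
The goal is to show that under the hypothesis on $X$—the existence of a net $(P_\alpha)$ of finite-rank contractive projections with $P_\alpha^* x^* \to x^*$ in norm for every $x^* \in X^*$—norm-attaining compact operators are dense in $K(X,Y)$ for every $Y$. The plan is to fix $T \in K(X,Y)$ and $\varepsilon > 0$, and to produce $S \in NA(X,Y) \cap K(X,Y)$ with $\|T - S\| < \varepsilon$. The natural strategy is a two-step approximation: first replace $T$ by a composition $T P_\alpha$ that is genuinely finite-rank and close to $T$, and then perturb the finite-rank operator so that it attains its norm, using that finite-rank operators land in a finite-dimensional space where the unit ball is compact.

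\begin{proof}
Fix $T\in K(X,Y)$ with $T\neq 0$ and $\varepsilon>0$; we may assume $\|T\|=1$. Since $T$ is compact, $T(B_X)$ is relatively compact, so $K:=\overline{T(B_X)}$ is a compact subset of $Y$. The hypothesis gives that $P_\alpha^*x^*\to x^*$ in norm for every $x^*\in X^*$; dualizing the factorization of this convergence, one checks that the adjoint convergence forces $\|T P_\alpha - T\|\to 0$. Indeed, for $x\in B_X$ and $y^*\in B_{Y^*}$ we have $\langle (T P_\alpha - T)x, y^*\rangle = \langle x, (P_\alpha^*-\Id)T^*y^*\rangle$, and since $T^*$ is compact the set $T^*(B_{Y^*})$ is relatively compact in $X^*$, so the pointwise norm convergence $P_\alpha^* z^*\to z^*$ is uniform on $T^*(B_{Y^*})$; taking the supremum over $x$ and $y^*$ yields $\|T P_\alpha - T\|\to 0$. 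Choose $\alpha$ with $\|T P_\alpha - T\|<\varepsilon/2$ and set $R:=T P_\alpha$, a finite-rank operator with range contained in the finite-dimensional space $T(P_\alpha(X))$.

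It remains to perturb the finite-rank operator $R$ by less than $\varepsilon/2$ to one attaining its norm. Since $P_\alpha$ is a finite-rank projection, $P_\alpha(X)$ is finite-dimensional, so $B_{P_\alpha(X)}$ is compact; as $R$ factors through $P_\alpha$ and is continuous, the supremum $\|R\|=\sup_{x\in B_X}\|R x\|=\sup_{u\in B_{P_\alpha(X)}}\|R u\|$ is attained on this compact ball, whence $R$ itself already attains its norm. Because $P_\alpha$ is contractive we have $\|R\|\le \|T\|=1$, and $R\in NA(X,Y)\cap K(X,Y)$. Thus $\|T-R\|<\varepsilon/2<\varepsilon$, completing the proof.
\end{proof}

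\textbf{The main obstacle.} The delicate point is the first step: deducing the operator-norm convergence $\|TP_\alpha - T\| \to 0$ from the merely \emph{pointwise} dual convergence $P_\alpha^* x^* \to x^*$. Pointwise convergence of the adjoints does not in general give uniform convergence, but the compactness of $T$ (equivalently, of $T^*$, so that $T^*(B_{Y^*})$ is relatively norm-compact in $X^*$) converts pointwise convergence on a compact set into uniform convergence there, which is exactly what is needed to pass from $\langle x,(P_\alpha^*-\Id)T^*y^*\rangle$ to a uniform estimate. The contractivity of the $P_\alpha$ then guarantees that the approximating operator does not increase the norm, so the finite-dimensionality of its range immediately yields norm attainment with no further perturbation required.
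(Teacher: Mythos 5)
Your proof is correct and follows essentially the same route as the paper: approximate $T$ by $TP_\alpha$, prove $\|TP_\alpha-T\|=\|P_\alpha^*T^*-T^*\|\to 0$ by upgrading the pointwise convergence $P_\alpha^*x^*\to x^*$ to uniform convergence on the relatively compact set $T^*(B_{Y^*})$ (the paper does this explicitly with an $\varepsilon/3$-net, which also uses $\|P_\alpha^*\|\le 1$, the fact implicit in your ``pointwise implies uniform on compacta'' step), and then observe that $TP_\alpha$ attains its norm because $TP_\alpha(B_X)=T(B_{P_\alpha(X)})$ with $B_{P_\alpha(X)}$ compact.
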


\begin{proof}
Let $Y$ be a Banach space and consider $T\in K(X,Y)$. For every $\alpha$, the operator $T P_\alpha$ attains its norm since $TP_\alpha(B_X)=T(B_{P_\alpha(X)})$ (here we use that $P_\alpha$ is a norm-one projection) and $B_{P_\alpha(X)}$ is compact. We claim that $(TP_\alpha)\longrightarrow T$ in the operator norm, finishing the proof. Indeed, given $\varepsilon>0$, as $T^*$ is compact, we may find an $\varepsilon/3$-net $x^*_1,\ldots,x_n^*\in X^*$ for $T^*(B_{Y^*})$ and we may find $\alpha_0$ such that  $\|P_\alpha^*(x_i^*)-x_i^*\|<\varepsilon/3$ for $i=1,\ldots,n$ and every $\alpha\geq \alpha_0$. Now, given $y^*\in B_{Y^*}$, we take $i\in \{1,\ldots,n\}$ such that $\|T^*(y^*)-x_i^*\|<\varepsilon/3$ and observe that
$$
\|P_\alpha^*T^*(y^*)-T^*(y^*)\|\leq \|P_\alpha^*T^*(y^*)-P_\alpha^*(x_i^*)\| + \|P_\alpha^*(x_i^*)-x_i^*\| + \|x_i^*-T^*(y^*)\|<\varepsilon.
$$
In other words, $\|TP_\alpha-T\|=\|P_\alpha^*T^*-T^*\|\leq \varepsilon$ for every $\alpha\geq \alpha_0$.
\end{proof}

It is shown in \cite[Proposition~3.2]{JoWo} that every $C(K)$ space satisfies the condition of the above proposition. The following is also a consequence of the proposition.

\begin{corollary}\label{coro-subc0monotone}
Let $X$ be a closed subspace of $c_0$ with a monotone Schauder basis. Then, for every Banach space $Y$, $NA(X,Y)\cap K(X,Y)$ is dense in $K(X,Y)$.
\end{corollary}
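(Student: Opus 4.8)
The plan is to verify the hypotheses of Proposition~\ref{prop-suficiente} for this particular $X$ and then invoke it directly. The natural net to try is the sequence $(P_n)_{n\in\N}$ of partial-sum projections associated with the given Schauder basis $(e_n)$ of $X$: each $P_n$ has finite rank, and because the basis is \emph{monotone} each $P_n$ is contractive, so $(P_n)$ is exactly a sequence of finite-rank contractive projections. Thus the whole statement collapses to the remaining hypothesis of Proposition~\ref{prop-suficiente}, namely that $P_n^*x^*\longrightarrow x^*$ in norm for every $x^*\in X^*$. Writing $Q_n=\Id-P_n$, this says $\|Q_n^*x^*\|\to 0$ for all $x^*$, i.e.\ that $(e_n)$ is \emph{shrinking}. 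Since $\sup_n\|Q_n^*\|=\sup_n\|Q_n\|\leq 2$, and since (as $X\subseteq c_0$) the coordinate functionals $g_j\in X^*$ given by $g_j(x)=x(j)$ have dense linear span in $X^*$—they are the images, under the quotient map $\ell_1=c_0^*\to X^*$ dual to the inclusion, of the unit vector basis of $\ell_1$—it suffices to prove $\|Q_n^*g_j\|\to 0$ for each fixed coordinate $j$.

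So the real content is that the monotone basis of a subspace of $c_0$ is shrinking, and the work concentrates on $\|Q_n^*g_j\|\to 0$. Suppose this failed for some $j$: then $\|Q_n^*g_j\|\geq\delta>0$ along a subsequence, which by a standard gliding-hump truncation produces a normalized block basic sequence $(u_k)$ of $(e_n)$ with $u_k(j)\geq\delta$ for all $k$ (passing to a subsequence and fixing signs). Two features will be exploited: $(u_k)$ lies in $c_0$, where a bounded sequence is weakly null precisely when it is coordinatewise null; and, being a block basis of a monotone basis, $(u_k)$ is again \emph{monotone} basic, so the norm of an initial block-sum never exceeds that of a longer one.

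The monotonicity is where the essential work is done, and the delicate case is the main obstacle. The mechanism I would use is that an all-positive prefix satisfies $\|\sum_{k\leq K}u_k\|\geq\sum_{k\leq K}u_k(j)\geq K\delta$, so block-sums are forced to grow; the plan is then to append a tail of coefficients that cancels coordinate $j$ in the full sum while, using that the vectors lie in $c_0$ (where the sup-norm only sees finitely many large coordinates at a time), keeping the full sum bounded—and monotonicity would then trap the large prefix below the bounded full sum, a contradiction. The hard part is precisely the case in which the blocks overlap heavily in the $c_0$-coordinates (the ``flat'', summing-basis-type behaviour), since there the naive cancellation need not keep the full sum bounded; this is exactly the phenomenon exhibited by the \emph{non-monotone} summing basis of $c_0$, which is genuinely non-shrinking, so any correct argument must use monotonicity precisely to rule it out. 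Once $(e_n)$ is shown to be shrinking, $P_n^*x^*\to x^*$ in norm for all $x^*\in X^*$ and Proposition~\ref{prop-suficiente} applies verbatim, completing the proof.
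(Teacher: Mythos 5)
Your reduction is the right one, and it matches the paper's strategy: for a monotone basis the partial-sum projections $P_n$ are finite-rank contractive projections, so everything comes down to the remaining hypothesis of Proposition~\ref{prop-suficiente}, i.e.\ to showing the basis is shrinking; your further reduction to $\|Q_n^*g_j\|\to 0$ for the coordinate functionals (via the uniform bound $\|Q_n\|\leq 2$ and the density of the span of the $g_j$ in $X^*$) is also correct, as is the gliding-hump extraction of a normalized block basic sequence $(u_k)$ with $u_k(j)\geq\delta$. But the proof stops exactly where the real difficulty begins. The assertion that a monotone basis of a closed subspace of $c_0$ is shrinking is the entire content of the corollary, and what you offer for it is a plan rather than an argument: you propose to append a tail that cancels the $j$-th coordinate of the growing prefix $\sum_{k\leq K}u_k$ while keeping the full sum bounded, and then trap the prefix under the full sum by monotonicity --- and you yourself point out that in the ``flat'', summing-basis-type case the cancellation need not keep the full sum bounded. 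Nothing in the proposal explains how monotonicity excludes that case. Since the summing basis of $c_0$ has basis constant exactly $2$ and is genuinely non-shrinking, any correct argument has to be quantitatively sharp at the constant $2$, and no estimate of that kind appears; the contradiction is never actually derived, so the key lemma is unproved.

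The paper closes precisely this gap by quoting a nontrivial external result: $c_0$ is $M$-embedded, $M$-embeddedness passes to closed subspaces, and by a theorem of Godefroy and Saphar every Schauder basis of an $M$-embedded space with basis constant less than $2$ is shrinking (see \cite[Corollary~III.3.10]{HWW}). A monotone basis has basis constant $1$, so it is shrinking and Proposition~\ref{prop-suficiente} applies verbatim, exactly as you intend in your last sentence. If you want a self-contained proof you would essentially have to reprove the Godefroy--Saphar theorem, whose argument runs through the $\ell_1$-decomposition $X^{***}=X^*\oplus_1 X^{\perp}$ furnished by $M$-embeddedness rather than through a gliding-hump computation inside $c_0$; otherwise, cite it as the paper does.
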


\begin{proof}
As $c_0$ is an $M$-embedded space \cite[Examples~III.1.4]{HWW} and $M$-embeddedness passes to closed subspaces \cite[Theorem~III.1.6]{HWW}, we may use the a result of G.~Godefroy and P.~Saphar that Schauder bases in $M$-embedded spaces with basis constant less than $2$ are shrinking (see \cite[Corollary~III.3.10]{HWW}, for instant). Then, $X$ possesses a shrinking Schauder basis and so Proposition~\ref{prop-suficiente} applies.
\end{proof}

Compare this result with the example given in Corollary~\ref{coro:Schauder-JS} of a closed subspace $X$ of $c_0$ with Schauder basis such that there is a compact operator defined on $X$ which cannot be approximated by norm-attaining operators. On the other hand, as far as we know, the following question remains open.

\begin{question}
Let $X$ be a closed subspace of $c_0$ with the metric approximation property. Is it true that for every Banach space $Y$, $NA(X,Y)$ is dense in $L(X,Y)$?
\end{question}

\vspace*{0.5cm}

\noindent \textbf{Acknowledgment:\ } The author is grateful to Rafael Pay\'{a} for many fruitful conversations about the content and the form of this paper. He also thanks Joe Diestel, Ioannis Gasparis, Bill Johnson, Gilles Godefroy, and Gin\'{e}s L\'{o}pez for kindly answering several inquiries. The author also thanks the anonymous referee for helpful suggestions about revision.

Research supported by Spanish MICINN and FEDER project no.\ MTM2012-31755 and by Junta de Andaluc\'{\i}a and FEDER grants FQM-185 and P09-FQM-4911.

\vspace*{1cm}

\end{document}